\newtheorem{theorem}{Theorem}
\newenvironment{proof}
      {\medskip\noindent{\bf Proof:}\hspace{1mm}}
      {\hfill$\Box$\medskip}
\begin{document}

\title{On the Ramsey Multiplicity of Complete Graphs}
\author{David Conlon\thanks{St John's College, Cambridge, CB2 1TP,
United Kingdom. Email: {\tt D.Conlon@dpmms.cam.ac.uk}.}}
\date{}

\maketitle

\begin{abstract}
\noindent
We show that, for $n$ large, there must exist at least
\[\frac{n^t}{C^{(1+o(1))t^2}}\]
monochromatic $K_t$s in any two-colouring of the edges of $K_n$,
where $C \approx 2.18$ is an explicitly defined constant. The old
lower bound, due to Erd\H{o}s \cite{E62}, and based upon the
standard bounds for Ramsey's theorem, is
\[\frac{n^t}{4^{(1+o(1))t^2}}.\]
\end{abstract}

\section{Introduction}

Let $k_t(G)$ be the number of complete subgraphs of order $t$ in a graph
$G$, and let
\[k_t(n) = \min\{k_t(G) + k_t(\overline{G}) : |G| = n\},\]
that is, $k_t(n)$ is the minimum number of monochromatic $K_t$s within a
two-colouring of the edges of $K_n$.

Our object of interest in this paper will be the limit
\[c_t = \lim_{n \rightarrow \infty} \frac{k_t(n)}{\binom{n}{t}},\]
that is, the minimum proportion of $K_t$s in a two-colouring of
the edges of $K_n$, for $n$ large, which are monochromatic. That this
limit actually exists is a consequence of the fact that the ratios
\[c_t(n) = \frac{k_t(n)}{\binom{n}{t}}\]
are themselves increasing with $n$ (and are bounded above by 1).

Erd\H{o}s, who pioneered the study of these multiplicity constants in
\cite{E62} proved, by a simple application of Ramsey's
Theorem, that
\[c_t \geq {\binom{r(t,t)}{t}}^{-1},\]
where $r(t,t)$ is just the standard diagonal Ramsey number. He
also proved, using the probabilistic method, that
\[c_t \leq 2^{1 - \binom{t}{2}}\]
and conjectured that this is the correct value of the multiplicity
constant $c_t$.

It was later proven by Thomason \cite{T89}, by constructing specific
counterexamples, that this conjecture is false for $t \geq 4$ (that it
is true for $t=3$ follows from a result of Goodman \cite{G59}). For
example, he showed that $c_4 < 1/33$, whereas the predicted multiplicity
from Erd\H{o}s's conjecture was $1/32$.

As a consequence of Thomason's result, the question of upper bounds on
$c_t$ has received quite a lot of attention over the years, and the
bounds on many of the $c_t$ have been improved (see for example
\cite{FR93}, \cite{JST96}, \cite{FR02}). However, none of the bounds
have ever been improved to below half of the conjectured
value, so it still seems likely that the multiplicity constant is
close to the conjectured value.

It is perhaps surprising though that the lower bound has received little
attention over the years, seeing as, given the usual upper
bound for Ramsey's Theorem of $r(t,t) \leq 4^t$, the lower bound given
by Erd\H{o}s only implies that
\[c_t \geq 4^{-(1+o(1))t^2},\]
which differs vastly from the upper bound. The only other published
work relating to lower bounds is the result of Giraud \cite{Gi79} showing
that $c_4 > 1/46$. The proof of this bound is highly technical and
quite specifically tailored to the $t=4$ case, so it is quite unlikely
that it could be extended.

In this paper we will take a closer look at the problem of determining
lower bounds for these multiplicity constants, proving that
\[c_t \geq C^{-(1+o(1))t^2},\]
where $C \approx 2.18$. More precisely, we have the following
theorem:

\begin{theorem}
Let $t_{\epsilon} (x)$ be a function with $t_{\epsilon} (0) =
\epsilon$ satisfying the differential equation
\[t'_{\epsilon}(x)= \log t_{\epsilon}(x) \frac{t_{\epsilon} (x) (1
- t_{\epsilon}(x))}{x - (1+x) t_{\epsilon}(x)}.\] Let $L =
\lim_{\epsilon \rightarrow 0} t_{\epsilon} (1)$ and let $C =
(L(1-L))^{-1/2}$. Then
\[c_t \geq C^{- (1 + o(1))t^2}.\]
\end{theorem}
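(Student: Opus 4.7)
The plan is to sharpen Erdős's original reduction by replacing the crude use of the Ramsey bound $r(t,t)\le 4^t$ with a quantitative iterative counting argument in the spirit of the Erdős--Szekeres proof of $R(s,t)\le\binom{s+t-2}{s-1}$. Introduce a two-parameter quantity $f(n;s,t)$ equal to the minimum over 2-colorings of $K_n$ of the sum $k_s(G)+k_t(\overline G)$, so that $k_t(n)\ge f(n;t,t)$. For any vertex $v$ with red degree $\alpha(n-1)$, every red $K_s$ through $v$ contains a red $K_{s-1}$ inside $v$'s red neighborhood, and every blue $K_t$ through $v$ a blue $K_{t-1}$ inside the blue neighborhood. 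Summing over $v$ and carefully avoiding double counting yields a recursion of the shape
\[
f(n;s,t)\;\ge\;\min_\alpha\Bigl[f(\alpha(n-1);s-1,t)\;+\;f((1-\alpha)(n-1);s,t-1)\Bigr]
\;+\;\text{(lower order)},
\]
where the minimum over $\alpha$ represents an adversary choosing the local density at $v$.

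To pass from this discrete recursion to the stated ODE, I would rescale. Put $s,t=\Theta(T)$ and write $f$ on a logarithmic scale: set $-\log_n f(n;s,t)\approx \Phi(\cdot)$ with appropriate normalization. The continuum limit of the recursion produces a one-dimensional flow in which the independent variable $x$ records a suitable "progress'' parameter along the recursion tree (essentially the fraction of recursive descent steps already performed), and the dependent variable $t_\epsilon(x)$ is the optimal density $\alpha$ along the extremal recursion trajectory. The factor $\log t_\epsilon(x)$ in the differential equation arises by differentiating an exponential rate of the form $n^{t\log(\cdot)}$; the factor $t(1-t)$ comes from the joint red/blue density weight at each split; and the denominator $x-(1+x)t$ encodes the conservation law linking density changes to the shrinkage of the neighborhood sizes $\alpha(n-1)$ and $(1-\alpha)(n-1)$. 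The initial condition $t_\epsilon(0)=\epsilon$ models starting the recursion from a vanishing perturbation, and running the flow to $x=1$ corresponds to exhausting the recursion, producing the limit density $L$. A direct computation then rewrites the integrated rate as $(L(1-L))^{-t^2/2(1+o(1))}$, identifying $C=(L(1-L))^{-1/2}$.

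The main obstacle is twofold. First, at the combinatorial level one must set up the recursion so that the monochromatic cliques counted in the two daughter terms are essentially disjoint, so the $f(n;s-1,t)$ and $f(n;s,t-1)$ contributions can be genuinely added rather than merely averaged; this requires a careful choice of which vertex to iterate on (likely, one whose neighborhood sizes meet a prescribed density profile). Second, and more seriously, one must verify that the continuum limit of the recursion does close as a scalar ODE in a single progress variable, and that the resulting equation has exactly the form stated. Matching the specific denominator $x-(1+x)t$ pins down the correct normalization and variational principle, and this identification is the technical heart of the argument. Once the ODE is obtained, extracting $L$ and $C$ is purely analytic and does not involve further combinatorics.
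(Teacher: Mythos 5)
Your proposed recursion
\[
f(n;s,t)\ \ge\ \min_\alpha\Bigl[f(\alpha(n-1);s-1,t)\ +\ f((1-\alpha)(n-1);s,t-1)\Bigr]+\text{(l.o.t.)}
\]
does not hold as stated, and this is a genuine gap rather than a technicality. The quantity $f(\alpha(n-1);s-1,t)$ lower-bounds the \emph{sum} of red $K_{s-1}$'s and blue $K_t$'s inside the red neighbourhood $N_r(v)$, but what your counting uses is only the red $K_{s-1}$ count there; similarly for the blue neighbourhood. You cannot extract the single-colour count from $f$, so the two daughter terms do not add up to anything you control. Your own remark that one must "set up the recursion so that the monochromatic cliques counted in the two daughter terms are essentially disjoint'' is precisely the obstruction, and it is not resolved in the proposal. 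The paper sidesteps this entirely by proving a \emph{disjunctive} (either/or) statement, not an additive one: in any colouring of $K_n$ there are either at least $U_{k,l}\binom{n}{k}$ red $K_k$'s or at least $V_{k,l}\binom{n}{l}$ blue $K_l$'s, where $U,V$ are minima over monotone lattice paths to $(k,l)$ of products of powers of a freely chosen threshold sequence $t_{i,j}$. Taking $k=l=t$ then bounds $c_t$ by a single minimum over paths, with no double counting to reconcile. That "minimum over paths'' formulation is the structural heart of the argument and is absent from your proposal.

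The continuum step also diverges from what actually happens. In the paper the ODE variable is $x=l/k$, the slope coordinate of the lattice point, not an abstract "progress'' parameter; the function $t_\epsilon(x)$ is chosen so that the ratio of $S_P$ between two paths that disagree by one unit square is $\exp\{o(1)\}$, and the vanishing of the local exponent is \emph{forced} to equal zero by integrating it over $[(i-1)/j,i/j]$, which is where the precise denominator $x-(1+x)t$ comes from. The $\log t_\epsilon$ factor arises because the exponent on $s_i$ grows linearly in the lattice coordinate (so comparing $t^a$ to $t^{a-1}$ produces a $-\log t$ term), not from "differentiating an exponential rate of the form $n^{t\log(\cdot)}$.'' Finally, you do not address the symmetry constraint $t_{j,i}=1-t_{i,j}$, which the ODE solution violates (it gives $t_\epsilon(1)\approx 0.7$, not $1/2$); the paper must patch a separate threshold sequence within a band of width $\delta^{-1}$ of the diagonal and bound the extra error, and this is a nontrivial portion of the proof that your plan omits. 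So while your intuition that one should optimise a density profile and pass to a continuum limit is in the right direction, both the combinatorial base step and the analytic identification are incorrect or missing as written.
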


The proof of this theorem must, however, wait until much later. We
will instead begin by giving a surprisingly simple proof of the
bound
\[c_t \geq 2\sqrt{2}^{-(1+o(1))t^2}.\]
This is a good place to start because it requires no preliminaries
and will provide us with a proof framework to which we can refer
when we discuss how to make further improvements on the bound.

\section{A first attempt}

\begin{theorem}
Let $k, l \geq 1$ be natural numbers. Then, in any red/blue-colouring of
the edges of $K_n$, there are at least
\[2^{-k(l-2) - \binom{k+1}{2}} \binom{n}{k} - O_{k,l}(n^{k-1})\]
red $K_k$s or at least
\[2^{-l(k-2) - \binom{l+1}{2}} \binom{n}{l} - O_{k,l}(n^{l-1})\]
blue $K_l$s.
\end{theorem}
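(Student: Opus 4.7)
The plan is to prove the statement by induction on $k+l$, mimicking the standard Ramsey neighbourhood recursion but tracking clique counts rather than mere existence. The base case $k=1$ (or $l=1$) is immediate since there are $n$ monochromatic $K_1$'s, beating the target $2^{-(l-1)} n$. For $k=2$ and arbitrary $l$, if fewer than $2^{-(2l-1)} \binom{n}{2}$ edges are red then the blue edge-density exceeds $1 - 2^{-(2l-1)}$, so a Kruskal--Katona-type clique bound yields at least $2^{-\binom{l+1}{2}}\binom{n}{l}$ blue $K_l$'s. The case $l=2$ is symmetric.

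For the inductive step with $k,l \geq 3$, call a vertex $v$ \emph{red-heavy} if its red neighbourhood $R_v$ satisfies $|R_v| \geq (n-1)/2$ and \emph{blue-heavy} otherwise. By pigeonhole one class has size at least $n/2$; by symmetry assume it is the red-heavy class. Applying the inductive hypothesis to $K[R_v]$ with parameter pair $(k-1, l)$ for each such $v$ yields one of two outcomes: either (i) $R_v$ contains at least $2^{-(k-1)(l-2)-\binom{k}{2}}\binom{|R_v|}{k-1} - O(|R_v|^{k-2})$ red $K_{k-1}$'s, each extending through $v$ to a red $K_k$ in $K_n$, or (ii) $R_v$ already contains at least $2^{-l(k-3)-\binom{l+1}{2}}\binom{|R_v|}{l} - O(|R_v|^{l-1})$ blue $K_l$'s. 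A second pigeonhole places $\geq n/4$ red-heavy vertices in the same sub-case.

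If it is sub-case (ii), any one such $v$ already witnesses, via $|R_v|\geq (n-1)/2$, the required $2^{-l(k-2)-\binom{l+1}{2}}\binom{n}{l} - O(n^{l-1})$ blue $K_l$'s, the constants matching exactly. If it is sub-case (i), summing red $K_{k-1}$-counts over the $\geq n/4$ contributing vertices and dividing by $k$ (to correct for the $k$-fold overcount of each red $K_k$ through its vertices) yields roughly $2^{-(k-1)(l-2)-\binom{k}{2}} \cdot 2^{-(k+1)} \binom{n}{k}$ red $K_k$'s; a short arithmetic check shows this exceeds the target $2^{-k(l-2)-\binom{k+1}{2}}\binom{n}{k}$ precisely when $l \geq 3$. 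The blue-heavy branch is entirely symmetric: apply the induction to $K[B_v]$ with pair $(k, l-1)$, extend blue $K_{l-1}$'s through $v$, and the parallel check requires $k \geq 3$.

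The structural argument is clean, and I expect the main obstacle to be the careful bookkeeping of the $O_{k,l}(n^{k-1})$ and $O_{k,l}(n^{l-1})$ error terms, which accumulate both from the polynomial approximation of $\binom{(n-1)/2}{k-1}$ and from the inductive error transported from $R_v$ (or $B_v$) up to $K_n$. Showing that these errors remain of the stated order at every step is tedious but routine.
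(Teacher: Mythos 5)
Your argument is correct, and the core mechanism --- a majority pigeonhole on vertex neighbourhoods followed by induction on the pair $(k,l)$ --- is the same one the paper uses. The only genuine difference is a local inefficiency. You invoke the pigeonhole twice: once to put $\geq n/2$ vertices into a common majority-colour class, and again to put $\geq n/4$ of those into a common inductive sub-case. You already observe that a single vertex in sub-case (ii) suffices to finish, yet you do not exploit this to sharpen the other branch: the natural dichotomy is not a second pigeonhole but rather ``some vertex falls into sub-case (ii), done immediately, or else all $n/2$ vertices fall into sub-case (i).'' With the full $n/2$ available, the exponent arithmetic closes for every $l \geq 2$ rather than only $l \geq 3$ --- the difference $\binom{k+1}{2} - \binom{k}{2} = k$ exactly absorbs the $2^{-(k-1)}$ coming from $\binom{(n-1)/2}{k-1}$ together with the $\tfrac{1}{k}\cdot\tfrac{n}{2}$ prefactor. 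Because your version surrenders a factor of $2$ in the red branch, you are forced to add the separate base cases $k = 2$ and $l = 2$ via a density / Kruskal--Katona argument. That patch does work (a union bound over the missing edges already gives it), but it is extra machinery the paper's tighter induction does not need: there, the single trivial base $k = 1$ or $l = 1$ is all that is required.
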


\begin{proof}
We will prove the result by induction. Note that for $k = 1$ or $l=1$
the result is trivial, since all $n$ points ($K_1$s) may be considered
as being red and blue.

Now let's assume that the theorem holds for all $(k, l)$ with
$k < k_0$ or $l < l_0$, and let's prove it for $(k_0, l_0)$. Suppose
that we have a $K_n$ the edges of which have been red/blue-coloured.
Then, for any vertex $v_i$, $i = 1, \cdots, n$, there is a colour $C_i$,
either red or blue, such that $v_i$ has at least $(n-1)/2$ neighbours
to which it is connected by this colour. Now, at least $n/2$ of these
colours $C_i$ are the same. Suppose, without loss of generality that
they are red and that the relevant vertices are $v_1, \cdots, v_{n/2}$.
Moreover, let $V_i$ be the set of red neighbours of $v_i$ for $i=1,
\cdots, n/2$. Since $|V_i| \geq (n-1)/2$, the induction hypothesis tells
us that either $V_i$ contains at least
\[2^{-(k_0 - 1)(l_0-2) - \binom{k_0}{2}} \binom{(n-1)/2}{k_0-1} -
O_{k_0,l_0}((n/2)^{k_0-2})\]
red $K_{k_0 - 1}$s or at least
\[2^{-(l_0)(k_0-3) - \binom{l_0 + 1}{2}} \binom{(n-1)/2}{l_0} -
O_{k_0,l_0}((n/2)^{l_0 - 1})\]
blue $K_{l_0}$s.

If the second case occurs for any $i \in \{1, \cdots, n/2\}$, then the
number of blue $K_{l_0}$s is at least
\begin{eqnarray*}
2^{-(l_0)(k_0-3) - \binom{l_0 + 1}{2}} 2^{-l_0} \binom{n}{l_0} -
O_{k_0,l_0}(n^{l_0 - 1}) \\
= 2^{-(l_0)(k_0-2) - \binom{l_0 + 1}{2}} \binom{n}{l_0} -
O_{k_0,l_0}(n^{l_0 - 1}).
\end{eqnarray*}
Therefore, since we are finished if this occurs, we may assume that for
every $i \in \{1, \cdots, n/2\}$, $V_i$ contains at least
\begin{eqnarray*}
2^{-(k_0 - 1)(l_0-2) - \binom{k_0}{2}} \binom{(n-1)/2}{k_0-1} -
O_{k_0,l_0}((n/2)^{k_0-2}) \\
= 2^{-(k_0 - 1)(l_0-2) - \binom{k_0+1}{2} +
1} \binom{n}{k_0-1} - O_{k_0,l_0}(n^{k_0-2})
\end{eqnarray*}
red $K_{k_0 - 1}$s. But each such red $K_{k_0-1}$ in $V_i$ is connected
all in red to the vertex $v_i$. Seeing as we may potentially overcount
$k_0$ times, we see that the number of monochromatic $K_{k_o}$s is
therefore at least
\begin{eqnarray*}
\frac{1}{k_0} \frac{n}{2} \left(2^{-(k_0 - 1)(l_0-2) -
\binom{k_0+1}{2} + 1} \binom{n}{k_0-1} - O_{k_0,l_0}(n^{k_0-2})\right)
\\
= 2^{-(k_0 - 1)(l_0-2) -\binom{k_0+1}{2}} \binom{n}{k_0} -
O_{k_0,l_0}(n^{k_0-1}),
\end{eqnarray*}
which implies the required bound. The result follows similarly if more
of the $C_i$ are blue than red.
\end{proof}

Our claim about $c_t$, that
\[c_t \geq 2\sqrt{2}^{-(1+o(1))t^2},\]
now follows from taking $k=l=t$.

\section{Balancing the argument}

We are now going to smooth out the argument of the previous
section by shifting around the thresholds that determine whether
we are considering the red or blue neighbourhood of a given
vertex. To make this more explicit, we will need two
definitions.\\

{\bf Definition} {\it Consider the lattice $L$ of points $(i,j)$
such that $i,j \geq 1$. A path in $L$ to $(k,l)$ is defined to be
a sequence $P = \{(a_i, b_i)\}_{i=0,\cdots,m}$ such that
\begin{itemize}
\item
$a_0 = 1$ or $b_0=1$,
\item
$a_1 \neq 1$ and $b_1 \neq 1$,
\item
$(a_i, b_i) = (a_{i-1}+1, b_{i-1})$ or $(a_{i-1}, b_{i-1}+1)$, and
\item
$(a_m,b_m) = (k,l)$.
\end{itemize}
We will denote the set of all paths to $(k,l)$ by $\Pi_{k,l}$.}\\

{\bf Definition} {\it A threshold sequence $\{t_{i,j}\}_{i,j \geq
2}$ is a sequence of real numbers such that $0 < t_{i,j} < 1$ and
$t_{j,i} = 1 - t_{i,j}$.} \\

Given these definitions, we may now state and prove the following
more general version of Theorem 2:

\begin{theorem}
Let $k, l \geq 1$ be natural numbers, and let $\{t_{i,j}\}_{i,j
\geq 2}$ be a threshold sequence. Given a path $P =
\{(a_i,b_i)\}_{i=0, \cdots, m}$, define $\{s_i\}_{i=1, \cdots, m}$
by
\[s_i = \left\{ \begin{array}{ll}
        t_{a_i,b_i} & \mbox{if $(a_i,b_i) = (a_{i-1}, b_{i-1}+1)$};\\
        1 - t_{a_i,b_i} & \mbox{if $(a_i,b_i) = (a_{i-1}+1, b_{i-1})$},
\end{array} \right.\]
and let $U_{k,l}$ and $V_{k,l}$ be given by
\[U_{k,l} = \min_{P \in \Pi_{k,l}} \prod_{i=1}^{m} s_i^{a_i}, V_{k,l} = \min_{P \in \Pi_{k,l}} \prod_{i=1}^{m} s_i^{b_i}.\]
Then, in any red/blue-colouring of the edges of
$K_n$, there are at least
\[U_{k,l} \binom{n}{k} - O_{k,l}(n^{k-1})\]
red $K_k$s or at least
\[V_{k,l} \binom{n}{l} - O_{k,l}(n^{l-1})\]
blue $K_l$s.
\end{theorem}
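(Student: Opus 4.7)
The plan is to imitate the proof of Theorem 2, replacing the uniform threshold $1/2$ with a path-adapted threshold at each inductive step. I would induct on pairs $(k, l)$ with respect to the partial order where $(k, l) < (k_0, l_0)$ iff $k \leq k_0$ and $l \leq l_0$ with at least one strict inequality. The base cases $k = 1$ or $l = 1$ are essentially trivial: the conditions $a_1 \neq 1$ and $b_1 \neq 1$, together with the monotone path steps, force any path to $(1, l)$ or $(k, 1)$ to be the single-point path at its endpoint, so the empty product gives $U_{k, l} = V_{k, l} = 1$, and the $n$ vertices of $K_n$ themselves supply the required monochromatic $K_1$s.

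For the inductive step at $(k_0, l_0)$ with $k_0, l_0 \geq 2$, write $t = t_{k_0, l_0}$ and apply pigeonhole with the specific threshold $1 - t$: each vertex is either red-heavy (at least $(1-t)(n-1)$ red neighbours) or blue-heavy (at least $t(n-1)$ blue neighbours), so either at least $(1-t)n$ vertices are red-heavy or at least $tn$ are blue-heavy. The value $1 - t$ is forced by the sign convention in the definition of $s_i$: the step $(k_0-1, l_0) \to (k_0, l_0)$ (adding a red vertex) receives the weight $s_i = 1 - t_{k_0, l_0}$, and taking the minimum over all paths through $(k_0, l_0)$ yields the recurrences
\[
U_{k_0, l_0} \leq \min\bigl\{ (1-t)^{k_0}\, U_{k_0-1, l_0},\ t^{k_0}\, U_{k_0, l_0-1} \bigr\},
\]
\[
V_{k_0, l_0} \leq \min\bigl\{ (1-t)^{l_0}\, V_{k_0-1, l_0},\ t^{l_0}\, V_{k_0, l_0-1} \bigr\}.
\]

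In the red-heavy case, I would apply the inductive hypothesis at $(k_0-1, l_0)$ to each red neighbourhood $V_i$ of size at least $(1-t)(n-1)$. If any single $V_i$ produces the blue $K_{l_0}$ alternative, then substituting its size bound gives at least $(1-t)^{l_0} V_{k_0-1, l_0} \binom{n}{l_0} - O(n^{l_0-1})$ blue $K_{l_0}$s in $K_n$, which by the first recurrence above is at least $V_{k_0, l_0} \binom{n}{l_0} - O(n^{l_0-1})$. Otherwise every $V_i$ produces many red $K_{k_0-1}$s; each such $K_{k_0-1}$, extended by $v_i$, becomes a red $K_{k_0}$, and summing over the $\geq (1-t)n$ red-heavy vertices while dividing by $k_0$ to correct for the $k_0$-fold overcounting gives at least $(1-t)^{k_0} U_{k_0-1, l_0} \binom{n}{k_0} - O(n^{k_0-1})$ red $K_{k_0}$s, matching the $U$ recurrence. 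The blue-heavy case is entirely symmetric, using $t$ in place of $1 - t$, applying induction at $(k_0, l_0-1)$, and invoking the second arguments of the two minima above.

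The one genuinely delicate point, which is the main thing to verify, is that the single threshold $1 - t_{k_0, l_0}$ must simultaneously close both the red-heavy and blue-heavy subcases against both the $U$ and $V$ components of the inductive hypothesis. This alignment works precisely because the pigeonhole dichotomy ``$\geq (1-t)n$ red-heavy or $\geq tn$ blue-heavy'' matches the asymmetry of the two recurrences in the correct direction, so no second choice of threshold is required. Everything else is routine bookkeeping with the lower-order $O_{k,l}$ error terms and mirrors the proof of Theorem 2.
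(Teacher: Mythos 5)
Your proposal is correct and follows essentially the same inductive argument as the paper's proof: the same pigeonhole dichotomy (with threshold $t_{k_0,l_0}$ on blue degree / $1-t_{k_0,l_0}$ on red degree), the same application of the inductive hypothesis to the large monochromatic neighbourhoods, and the same closing via the one-step recurrences $U_{k,l} \leq \min\{(1-t_{k,l})^k U_{k-1,l},\, t_{k,l}^k U_{k,l-1}\}$ and its $V$ analogue. The only cosmetic difference is that you carry out the red-heavy subcase in detail where the paper presents the blue-heavy one (each referring to the other as symmetric), and you helpfully make the recurrences explicit, which the paper leaves implicit in the step from the path definition to the displayed inequalities.
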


\begin{proof}
The proof is quite similar to the proof of Theorem 2. For $k=1$ or
$l=1$, the theorem is trivial, since a product over an empty set
is simply 1. We may therefore assume that we know the theorem
holds for all $(k,l)$ such that $k < k_0$ or $l < l_0$ and attempt
to prove it at $(k_0, l_0)$.

Suppose we have a red/blue-colouring of the edges of $K_n$. Then every
vertex $v$ is connected to either $t_{k_0,l_0} (n-1)$ vertices by blue
edges or $(1 - t_{k_0,l_0}) (n-1)$ vertices by red edges. Moreover,
either the first case occurs $t_{k_0,l_0} n$ times or the second case
occurs $(1 - t_{k_0,l_0}) n$ times. Let us assume without loss of
generality that the case where we have $t_{k_0,l_0} n$ vertices
$v_i$, each of which is connected to $t_{k_0,l_0} (n-1)$ neighbours by
blue edges, occurs. For each $v_i$ we may conclude, by using the
induction hypothesis, that the set $V_i$ of blue neighbours contains
either
\[U_{k_0,l_0-1} \binom{t_{k_0,l_0} n}{k_0} - 0_{k_0,l_0} (n^{k_0-1})\]
red $K_{k_0}$s or
\[V_{k_0,l_0-1} \binom{t_{k_0,l_0} n}{l_0 - 1} - 0_{k_0,l_0}
(n^{l_0-2})\]
blue $K_{l_0 - 1}$s.

If the first case occurs for any of the vertices we are done, since
\begin{eqnarray*}
U_{k_0,l_0-1} \binom{t_{k_0,l_0} n}{k_0} - 0_{k_0,l_0} (n^{k_0-1}) &=&
t_{k_0,l_0}^{k_0} U_{k_0,l_0-1} \binom{n}{k_0} - 0_{k_0,l_0}
(n^{k_0-1})\\
& \geq & U_{k_0,l_0} \binom{n}{k_0} - 0_{k_0,l_0} (n^{k_0-1}).
\end{eqnarray*}
Therefore we must have that the second case occurs for all of the $v_i$.
We may then conclude, since each of the $V_i$ is connected to $v_i$ by
all blue edges, that we have
\begin{eqnarray*}
\frac{1}{l_0} t_{k_0,l_0} n \left(V_{k_0,l_0-1} \binom{t_{k_0,l_0}
n}{l_0 - 1} - 0_{k_0,l_0} (n^{l_0-2})\right) & = & t_{k_0,l_0}^{l_0}
V_{k_0,l_0-1} \binom{n}{l} - 0_{k_0,l_0} (n^{l_0-1}) \\
& \geq & V_{k_0,l_0} \binom{n}{l_0} - 0_{k_0,l_0} (n^{l_0-1})
\end{eqnarray*}
blue $K_l$s, so we are done in this case as well.

The result follows similarly in the case where we have $(1-t_{k_0,l_0})
n$ vertices with red degree greater than or equal to $(1-t_{k_0,l_0})
(n-1)$.
\end{proof}

Our first result about $c_t$, that
\[c_t \geq 2\sqrt{2}^{-(1+o(1))t^2}\]
follows again from this result by taking $t_{i,j} = 1/2$ everywhere and
noting that a path to $(t,t)$ which minimises
\[\prod_{i=1}^{m} s_i^{\max(a_i,b_i)} = \prod_{i=1}^{m}
2^{-\max(a_i,b_i)}\]
is $(2,1), (2,2), (3,2), \cdots, (t,2), (t,3), (t,4), \cdots, (t,t)$.

The rest of the paper will be concerned with replacing this somewhat
artificial minimum with a more natural one. To do so we will choose a
threshold sequence in such a way that the products
\[\prod_{i=1}^{m} s_i^{\max(a_i,b_i)}\]
are the same (or very nearly the same) for all paths to $(t,t)$. It is
possible to encapsulate the result of making such a judicious choice of
threshold sequence within a recurrence relation. This will be our next
theorem.

\begin{theorem}
Let $M_{k,l}$ be the sequence given by the following conditions:
\begin{itemize}
\item
$M_{k,1} = M_{1, l} = 1$, for all $k,l \geq 1$;
\item
$M_{k,l} = (M_{k,l-1}^{-1/\mu} + M_{k-1,l}^{-1/\mu})^{-\mu}$,
where $\mu = \max(k, l)$.
\end{itemize}
Then
$c_t \geq M_{t,t}$.
\end{theorem}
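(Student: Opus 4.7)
The strategy is to apply Theorem 3 with a threshold sequence engineered so that the $M_{k,l}$ recurrence is realised on the diagonal. An easy induction on $k+l$ from the defining recurrence shows that $M_{k,l}=M_{l,k}$, so we may set, for $k,l\geq 2$ with $\mu=\max(k,l)$,
\[t_{k,l}=\frac{M_{k-1,l}^{1/\mu}}{M_{k-1,l}^{1/\mu}+M_{k,l-1}^{1/\mu}}.\]
The symmetries $M_{k-1,l}=M_{l,k-1}$ and $M_{k,l-1}=M_{l-1,k}$ force $t_{l,k}=1-t_{k,l}$, so this is a legitimate threshold sequence in the sense of the previous section, and an immediate computation verifies that
\[t_{k,l}^{\mu}\,M_{k,l-1}=(1-t_{k,l})^{\mu}\,M_{k-1,l}=M_{k,l}.\]

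The heart of the argument is the claim that, with this threshold sequence,
\[U_{k,l}\geq M_{k,l}\qquad\text{and}\qquad V_{k,l}\geq M_{k,l},\]
which I would prove by induction on $k+l$. Both sides equal $1$ whenever $k=1$ or $l=1$, giving the base case. For the inductive step, examining the last step of a path in $\Pi_{k,l}$ yields the recurrence
\[U_{k,l}=\min\bigl(t_{k,l}^{\,k}\,U_{k,l-1},\,(1-t_{k,l})^{k}\,U_{k-1,l}\bigr),\]
since the exponent on $s_m$ inside $\prod s_i^{a_i}$ is $a_m=k$. Applying the induction hypothesis and then the elementary fact that $s^{k}\geq s^{\mu}$ for $s\in(0,1)$ and $k\leq\mu$ yields
\[U_{k,l}\geq\min\bigl(t_{k,l}^{\,\mu}\,M_{k,l-1},\,(1-t_{k,l})^{\mu}\,M_{k-1,l}\bigr)=M_{k,l}.\]
An identical argument with $b_m=l\leq\mu$ in place of $a_m=k$ handles $V_{k,l}\geq M_{k,l}$.

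Specialising to $k=l=t$ and feeding this into Theorem 3, every red/blue-colouring of $K_n$ must contain at least
\[M_{t,t}\binom{n}{t}-O(n^{t-1})\]
monochromatic $K_t$s. Dividing by $\binom{n}{t}$ and letting $n\to\infty$ gives $c_t\geq M_{t,t}$. The one delicate point in the plan is the mismatch between the $\mu$-exponent used to calibrate the thresholds and the $a_i$- and $b_i$-exponents that actually appear in Theorem 3; this mismatch is benign because each $s_i<1$, so lowering an exponent can only enlarge the relevant product, and so the bound tightens in our favour.
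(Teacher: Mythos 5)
Your proof is correct and follows essentially the same route as the paper: you apply Theorem 3 with the threshold sequence that balances the two branches of the recurrence (the paper defines the same thresholds implicitly via $t_{k,l}^{\mu} S_{k,l-1} = (1-t_{k,l})^{\mu} S_{k-1,l}$, which yields exactly your formula for $t_{k,l}$). The only presentational difference is that the paper packages the exponent mismatch into a single quantity $S_{k,l}=\min_{P}\prod_i s_i^{\max(a_i,b_i)}$ and notes $c_t\geq S_{t,t}$, whereas you bound $U_{k,l}$ and $V_{k,l}$ separately using $s^{k}\geq s^{\mu}$ for $s\in(0,1)$ --- the underlying observation is identical, and your explicit treatment of that point is if anything slightly more careful than the paper's.
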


\begin{proof}
Given a threshold sequence $\{t_{i,j}\}_{i,j \geq 2}$ and a path $P =
\{(a_i,b_i)\}_{i=0, \cdots, m} \in \Pi_{k,l}$, define $\{s_i\}_{i=1,
\cdots, m}$ by
\[s_i = \left\{ \begin{array}{ll}
        t_{a_i,b_i} & \mbox{if $(a_i,b_i) = (a_{i-1}, b_{i-1}+1)$};\\
        1 - t_{a_i,b_i} & \mbox{if $(a_i,b_i) = (a_{i-1}+1, b_{i-1})$},
\end{array} \right.\]
Then we know, from Theorem 3, that
\[c_t \geq S_{t,t},\]
where
\[S_{k,l} = \min_{P \in \Pi_{k,l}} \prod_{i=1}^{m} s_i^{\max(a_i,b_i)}.\]

Now, suppose that we have chosen the values of a threshold
sequence $\{t_{i,j}\}_{i,j \geq 2}$ for all $(i,j)$ such that $i <
k$ or $j < l$. Then the optimal way to choose $t_{k,l}$ so as to
maximise $S_{k,l}$ will be to take $t_{k,l}$ satisfying
\[t_{k,l}^{\mu} S_{k,l-1} = (1 - t_{k,l})^{\mu} S_{k-1,l},\]
that is
\[t_{k,l} = \frac{\sqrt[\mu]{S_{k-1,l}}}{\sqrt[\mu]{S_{k,l-1}} + \sqrt[\mu]{S_{k-1,l}}}.\]
Then, since
\[S_{k,l} = \min(t_{k,l}^{\mu} S_{k,l-1}, (1 - t_{k,l})^{\mu} S_{k-1,l}),\]
we have that
\[S_{k,l} = (S_{k,l-1}^{-1/\mu} + S_{k-1,l}^{-1/\mu})^{-\mu}.\]
If we had chosen this optimal threshold at each step it is easy to see
(since $S_{k,1} = S_{1,l} = 1$ for all $k,l \geq 1$) that
\[S_{k,l} = M_{k,l},\]
so, in particular,
\[c_t \geq S_{t,t} = M_{t,t}.\]
\end{proof}

Unfortunately, though this recurrence does, in theory, provide the solution to
the problem of finding the best possible threshold sequence, the recurrence appears
to be very difficult to solve explicitly. A computation done up to relatively large
values of $t$ does however suggest that
\[c_t \geq C^{-(1+o(1))t^2},\]
where $C$ is approximately $2.18$. In the next section, we shall analytise the
problem in order to show that this is indeed the case.

\section{The analytic approach}

As usual, given a threshold set $\{t_{i,j}\}_{i,j \geq 2}$ and a path $P =
\{(a_i,b_i)\}_{i=0, \cdots, m} \in \Pi_{t,t}$, we define $\{s_i\}_{i=1,
\cdots, m}$ to be
\[s_i = \left\{ \begin{array}{ll}
        t_{a_i,b_i} & \mbox{if $(a_i,b_i) = (a_{i-1}, b_{i-1}+1)$};\\
        1 - t_{a_i,b_i} & \mbox{if $(a_i,b_i) = (a_{i-1}+1, b_{i-1})$},
\end{array} \right.\]
Theorem 3 then tells us that
\[c_t \geq S_{t,t} = \min_{P \in \Pi_{t,t}} S_P,\]
where
\[S_P = \prod_{i=1}^{m} s_i^{\max(a_i,b_i)}.\]
Note that because any threshold set is chosen to be symmetric it is
sufficient to take the minimum over all paths which lie strictly under
the diagonal.

Now, let us suppose that our function $t_{k,l}$ is a smooth function of
the form $t_{k,l} = t(l/k)$. Let $P_1$ and $P_2$ be two paths which
differ by very little. More explicitly, suppose that the two paths are
the same, except that between $(a-1,b-1)$ and $(a,b)$, $P_1$ follows
the path $(a-1,b-1), (a-1, b), (a, b)$ and $P_2$ follows the alternate path
$(a-1,b-1), (a, b-1), (a, b)$. Let's calculate the ratio
$S_{P_1}/S_{P_2}$. Since the paths agree everywhere except between
$(a-1,b-1)$ and $(a,b)$, this ratio is, for $a > b$, simply
\[\frac{(1-t_{a,b})^a (t_{a-1,b})^{a-1}}{(t_{a,b})^a (1 - t_{a,
b-1})^a}.\]
But now, since $t_{k,l} = t(l/k)$ is presumed smooth, we may write
\[t_{a-1,b} = t(b/(a-1)) = t(b/a) + t'(b/a) \frac{b}{a(a-1)} \pm
\frac{M}{a^2}\]
and
\[t_{a, b-1} = t((b-1)/a) =  t(b/a) - t'(b/a) \frac{1}{a} \pm
\frac{M}{a^2},\] where $M$ may be taken to be the maximum value of
$|t''(x)|$ over the range $0 \leq x \leq 1$.

If we write out and simplify the ratio using these expansions, we get
\[\frac{\left(1 + \frac{t'(b/a)}{t(b/a)} \frac{b}{a(a-1)} \pm
\frac{M}{t(b/a) a^2}\right)^{a-1}}{t(b/a) \left( 1 +
\frac{t'(b/a)}{1 - t(b/a)}\frac{1}{a} \pm \frac{M}{(1 -
t(b/a))a^2}\right)^a}.\] Using the approximation, valid for $|z|$
sufficiently small, that $|e^z - 1 - z| \leq z^2$, we see that for
$a \geq a_0$, say, the ratio is given by
\[R_{a, b} = \exp\left\{\frac{t'(b/a)}{t(b/a)} \frac{b}{a} - \log
t(b/a) - \frac{t'(b/a)}{1 - t(b/a)} \pm \frac{M'}{a}\right\},\]
where $M'$ and $a_0$ are constants which depend only on $t(x)$.
Note that $M'$ and $a_0$ have at most a polynomial dependence on
the maximum values of $1/t(x), 1/(1- t(x)), |t'(x)|$ and
$|t''(x)|$ in the range $0 \leq x \leq 1$.

We now choose $t_{\epsilon} (x): [0,1] \rightarrow \mathbb{R}$ to
be a smooth function satisfying:
\begin{itemize}
\item
$t_{\epsilon} (0) = \epsilon$,
\item
$t'_{\epsilon}(x)= \log t_{\epsilon}(x) \frac{t_{\epsilon} (x) (1
- t_{\epsilon}(x))}{x - (1+x) t_{\epsilon}(x)}$.
\end{itemize}
That, for $\epsilon > 0$, such a function exists is a consequence
of standard existence results in the theory of differential
equations (the Picard-Lindel\"of Theorem, for example). Note that
each $t_{\epsilon} (x)$ is increasing. Moreover, it is easily
verified that the function $t_1 (x) = 1$ everywhere, so that, for
$\epsilon < 1$, $t_{\epsilon} (x)$ is always less than $1$. We
shall assume that it is always bounded away from $1$ by
$\epsilon'$.

Let $\delta = \min(\epsilon, \epsilon')$. It is easy to verify
that the maximum values of $|t'_{\epsilon}(x)|$ and
$|t''_{\epsilon}(x)|$ have at most a polynomial dependence on
$\delta$. Now let's turn our attention to estimating that part of
the exponent of $R_{i,j}$ given by
\[E'_{i,j} = \frac{t'_{\epsilon}(i/j)}{t_{\epsilon}(i/j)} \frac{i}{j} - \log
t_{\epsilon}(i/j) - \frac{t'_{\epsilon}(i/j)}{1 -
t_{\epsilon}(i/j)}.\] We are going to approximate this by the
corresponding integral
\[I_{i,j} = j \int_{(i-1)/j}^{i/j} \left(\frac{t'_{\epsilon}(x)}{t_{\epsilon}(x)} x - \log
t_{\epsilon}(x) - \frac{t'_{\epsilon}(x)}{1 -
t_{\epsilon}(x)}\right) \,dx.\] To show that this is a good
approximation, note that the function
\[g(x) = \frac{t'_{\epsilon}(x)}{t_{\epsilon}(x)} x - \log t_{\epsilon}(x) - \frac{t'_{\epsilon}(x)}{1 -
t_{\epsilon}(x)}\] may be approximated, from the mean-value
theorem, by
\[|g(x + \eta) - g(x)| \leq \max_{0 \leq \theta \leq \eta} (g'(x +
\theta)) \eta.\] But now, again, since the maximum values of
$|t'_{\epsilon}(x)|$ and $|t''_{\epsilon}(x)|$ have a polynomial
dependence on $\delta$, and the first derivative of $g$ may be
expressed in terms of these, we see that
\[|g(x + \eta) - g(x)| \leq \delta^{-c} \eta\]
for some constant $c$.

Assume, without loss of generality, that $a_0$ was chosen to be
greater than or equal to $\delta^{-(c+1)}$. It may need to be
larger still, as we will later need $a_0$ to be larger than the
maximum value of both $|t'_{\epsilon} (x)|$ and $|t''_{\epsilon}
(x)|$, but this is certainly achieved by taking it to be some
fairly large power of $\delta^{-1}$. We therefore see that, for $j
\geq a_0$,
\begin{eqnarray*}
\left|I_{i,j} - E'_{i,j}\right| & = & \left|j \int_{(i-1)/j}^{i/j}
(g(x) - g(i/j)) \,dx\right|\\ & \leq & \int_{(i-1)/j}^{i/j} j
|g(x) - g(i/j)| \,dx
\\ & \leq & \frac{1}{\delta^c j}\\ & \leq & \delta.
\end{eqnarray*}

But now, by the definition of $t_{\epsilon}(x)$,
\[I_{i,j} = 0,\]
so that
\[|E'_{i,j}| \leq \delta.\]
Now the exponent $E_{i,j}$ of $R_{i,j}$ is
\[E_{i,j} = E'_{i,j} \pm M'/j,\]
and therefore, since the constant $M'$ has at most a polynomial
dependence on $\delta$,
\[|E_{i,j}| \leq \delta + \frac{1}{\delta^d j},\]
for some constant $d$.

Therefore, if $t_{\epsilon}$ were our threshold function, we see
that the maximum difference between two different paths leading to
$(t,t)$ would be a factor of
\[R = \prod_{1 \leq j \leq t} \prod_{1 \leq i \leq j} \exp\{|E_{i,j}|\}.\]
The exponent of this function is
\[E = \sum_{1 \leq j \leq t} \sum_{1 \leq i \leq j} |E_{i,j}|\]
which by our earlier estimate on $E_{i,j}$ for $j \geq a_0$ is
less than
\begin{eqnarray*}
\sum_{a_0 \leq j \leq t} \sum_{1 \leq i \leq j} |E_{i,j}| +
\delta^{-e} & \leq & \sum_{a_0 \leq j \leq t} \sum_{1 \leq i \leq
j} \left(\delta + \frac{1}{\delta^d j}\right) + \delta^{-e}\\ &
\leq & \delta t^2 + \delta^{-d} t + \delta^{-e},
\end{eqnarray*}
the $\delta^{-e}$ term coming from the terms with $j < a_0$.

Now the value of $S_D$ along the diagonal path $D$ given by
\[(2,1), (2,2), (3,2), (3,3), \cdots, (t,t-1), (t,t)\]
is simply
\[(t_{\epsilon} (1) (1 - t_{\epsilon} (1)))^{\binom{t}{2}} = C_{\epsilon}^{- t^2},\]
say. Therefore, we see that for all paths $P$
\[S_P \geq C_{\epsilon}^{-t^2} \times \exp\left\{-(\delta t^2 + \delta^{-d} t
+ \delta^{-e})\right\}.\] Taking limits in an appropriate fashion
should now imply Theorem 1, which, we recall, states

{\bf Theorem 1} {\it Let $t_{\epsilon} (x)$ be a function with
$t_{\epsilon} (0) = \epsilon$ satisfying the differential equation
\[t'_{\epsilon}(x)= \log t_{\epsilon}(x) \frac{t_{\epsilon} (x) (1
- t_{\epsilon}(x))}{x - (1+x) t_{\epsilon}(x)}.\] Let $L =
\lim_{\epsilon \rightarrow 0} t_{\epsilon} (1)$ and let $C =
(L(1-L))^{-1/2}$. Then
\[c_t \geq C^{- (1 + o(1))t^2}.\]}

This theorem agrees beautifully with the computer calculation from
the previous section, but is not yet rigorous, for the simple
reason that $t_{\epsilon}$ does not give a well-defined threshold
sequence. The problem is that it is not symmetric at the diagonal.
In fact, calculations suggest that the value of $t_{\epsilon} (1)$
for $\epsilon$ small is closer to $0.7$ than $0.5$.

We're nearly there though, as we can define a well-behaved
threshold function $T_{k,l}^{\epsilon}$ using $t_{\epsilon}$. For
$l < k - \delta^{-1}$ (or rather the integer value thereof), this
threshold sequence is just $t_{\epsilon} (l/k)$, but near the
diagonal we have to do something a little different.

To define $T_{k,l}^{\epsilon}$ in the range $l \geq k -
\delta^{-1}$ we will define numbers $a_0, \cdots, a_{\delta^{-1}}$
such that $T_{k, k - m}^{\epsilon} = a_m$ for each $m = 0, \cdots,
\delta^{-1}$. Naturally, we take $a_0$ to be just $1/2$. Suppose
now that we have defined $a_0, \cdots, a_{i-1}$ and that we would
like to define $a_i$. Like earlier, we want the term
\[\frac{(1-T_{k,k-i})^k (T_{k-1,k-i})^{k-1}}{(T_{k,k-i})^k (1 - T_{k,
k-i-1})^k} = \frac{(1-a_{i})^k a_{i-1}^{k-1}}{a_{i}^k
(1-a_{i+1})^k}\] to be as close to one as possible. We will not
split hairs, however, and simply define $a_{i+1}$ by the relation
\[\frac{1 - a_{i+1}}{1 - a_i} = \frac{a_{i-1}}{a_i},\]
so that the ratio above simply takes the value $1/a_{i-1}$. Note
also that, provided $a_1 \geq 1/2$, the $a_i$ are strictly
increasing, so that in particular $1/a_{i-1}$ is always smaller
than $2$. But now, for any given initial value $a_1 \geq 1/2$,
denote by $f(a_1)$ the value $a_{\delta^{-1}}$. A fairly
straightforward observation is that $f$ is a continuous strictly
increasing function with respect to $a_1$, with $f(1/2) = 1/2$ and
$f(1) = 1$, so in particular, there exists $a_{\epsilon}$ such
that $f(a_{\epsilon}) = t_{\epsilon} (1)$.

Now it is easily verified that
\[\frac{a_i}{a_{i+1}} \geq \frac{1-a_{i+1}}{1-a_i} =
\frac{a_{i-1}}{a_i},\] so that the ratios are successively
decreasing. Therefore,
\[\left(\frac{a_m}{a_{m-1}}\right)^m \leq \frac{a_{m} \cdots
a_1}{a_{m-1} \cdots a_0} \leq 2,\] so that, for $\delta$
sufficiently small,
\[\frac{a_{\delta^{-1}}}{a_{\delta^{-1}-1}} \leq 2^{\delta} \leq 1 + \delta.\]

Now choose $a_1 = a_{\epsilon}$, and define the threshold sequence
$T_{k,l}^{\epsilon}$ using this as the seed. We already know that
any path $P$ which lies below the path $Q$ given by
\[(\delta^{-1} + 3, 1), (\delta^{-1} + 3, 2), (\delta^{-1} + 4, 2), \cdots,
(t, t - \delta^{-1} - 2), (t, t - \delta^{-1} - 1)\] satisfies the
inequality
\[S_P \geq C_{\epsilon}^{-t^2} \times \exp\left\{-(\delta t^2 + \delta^{-d} t
+ \delta^{-e})\right\}.\] How much more error do we incur pushing
this up to the diagonal?

Again, suppose that $P_1$ and $P_2$ are paths that agree
everywhere, except that $P_1$ follows the path $(a-1,b-1), (a-1,
b), (a, b)$ and $P_2$ follows the alternate path $(a-1,b-1), (a,
b-1), (a, b)$. For $b \geq a - \delta^{-1} + 1$, all of the
entries are defined so that the error coming from such a
distortion is at most a factor of $2$. Therefore, the total amount
of extra error we could get from distorting paths within this
region is just
\[2^{\delta^{-1} t}.\]
The only problems might be when $b = a - \delta^{-1}$ or $a -
\delta^{-1} - 1$, these being the cases where the two different
pieces of our threshold sequence mix. In the first case we have to
consider the ratio
\[R_{a, a - \delta^{-1}} = \frac{(1 - a_{\delta^{-1}})^a (a_{\delta^{-1} -
1})^{a-1}}{(a_{\delta^{-1}})^a (1 - t_{\epsilon} (1 -
\frac{\delta^{-1} + 1}{a}))^a}.\] We may use the facts that
$a_{\delta^{-1}} = a_{\delta^{-1} - 1} \pm \delta$ and
\[t_{\epsilon} \left( 1 - \frac{\delta^{-1} + 1}{a}\right) = t_{\epsilon} (1)
\pm 2 \delta = a_{\delta^{-1}} \pm 2 \delta\] (recall that we took
$a_0$ earlier to be a sufficiently large power of $\delta^{-1}$),
to show that, for $\delta$ small,
\[|\log R_{a, a - \delta^{-1}}| \leq 10 \delta t\]
A similar result holds when $b = a - \delta^{-1} - 1$, so the
total extra error incurred from all these crossover squares is at
most $e^{20 \delta t^2}$. We therefore see, finally, that for all
paths $P$
\[S_P \geq C_{\epsilon}^{-t^2} \times \exp\left\{-(21 \delta t^2 + (\delta^{-d} + \delta^{-1}) t
+ \delta^{-e})\right\}.\] Taking limits appropriately now implies
Theorem 1.

\section{Concluding remarks}

With the proof now finished, we wish to point out that we believe
Theorem 3 to be the natural analogue, for multiplicities, of the
well-known Erd\H{o}s-Szekeres argument \cite{ES35} for proving
Ramsey's Theorem. To bring out the correspondence note that the
result implied by the standard proof of Ramsey's Theorem may be
written as\\

{\bf Ramsey's Theorem}
{\it Let $k, l \geq 1$ be natural numbers, and let $\{t_{i,j}\}_{i,j
\geq 2}$ be a threshold set. Given a path $P = \{(a_i,b_i)\}_{i=0,
\cdots, m}$, define $\{s_i\}_{i=1, \cdots, m}$ by
\[s_i = \left\{ \begin{array}{ll}
        t_{a_i,b_i} & \mbox{if $(a_i,b_i) = (a_{i-1}, b_{i-1}+1)$};\\
        1 - t_{a_i,b_i} & \mbox{if $(a_i,b_i) = (a_{i-1}+1, b_{i-1})$},
\end{array} \right.\]
and let $n = n_{k,l}$ be given by
\[n_{k,l} = \min_{P \in \Pi_{k,l}} \prod_{i=1}^{m} \frac{1}{s_i}.\]
Then, in any red/blue-colouring of the edges of $K_n$ there is
either a red $K_k$ or a blue $K_l$. }\\

If we were to take the threshold set to be $1/2$ everywhere this yields
\[r(k,l) \leq 2^{k+l-3},\]
but, if instead we follow Erd\H{o}s and Szekeres by choosing $t_{k,l} =
\frac{l}{k+l}$ (which is the correct choice to make the product over
each path the same), we get the bound
\[r(k,l) \leq \binom{k+l}{k}.\]
Near the diagonal, this, disappointingly, does little better than the
easy bound coming from taking a trivial threshold set. This we feel can
be explained by the fact that almost all paths do lie close to the
diagonal, a fact which is quite to our advantage in the multiplicities
case.

Despite the large improvement that we have made in this paper, it
still seems likely that the true values of the multiplicity
constants are much smaller still, perhaps even as small as
\[\sqrt{2}^{-(1+o(1))t^2}.\]
While we will not be so bold as to conjecture such a bound, we do
conjecture that there exists a constant $\epsilon > 0$ such that
\[c_t \geq (C-\epsilon)^{-(1+o(1))t^2},\]
though we feel that making such an improvement would be of comparable
difficulty to showing that
\[r(t,t) \leq (4-\epsilon)^t.\]

It is also worth noting that the methods of this paper generalise
easily to cover the multicolour case, that is to count the number
of monochromatic $t$-cliques within a $q$-colouring. The analogue
of Theorem 4, for example, reads

\begin{theorem}
Let $M_{k_1, \cdots, k_q}$ be the sequence defined by the
following conditions:
\begin{itemize}
\item
$M_{k_1, \cdots, 1} = \cdots = M_{1, cdots, k_q} = 1$, for all
$k_1, \cdots, k_q \geq 1$;
\item
$M_{k_1, k_2, \cdots, k_q} = (M_{k_1 - 1, k_2, \cdots,
k_q}^{-1/\mu} + \cdots + M_{k_1, k_2, \cdots, k_q - 1}^{-1/\mu}
)^{-\mu}$, where $\mu = \max(k_1, \cdots, k_q)$.
\end{itemize}
Then the number of monochromatic $K_t$ within any $q$-colouring of
$K_n$ is at least
\[M_{t, t, \cdots, t} \binom{n}{t} + C_{q, t} n^{t-1}.\]
\end{theorem}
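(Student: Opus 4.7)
The plan is to mirror the two-colour development (Theorems 3 and 4) in $q$ dimensions. First I would replace the planar lattice by $L_q = \mathbb{Z}_{\geq 1}^{q}$ and declare a path in $\Pi_{k_1,\ldots,k_q}$ to be a sequence $P=\{\mathbf{a}_i\}_{i=0,\ldots,m}$ of lattice points starting on the boundary (some coordinate equal to $1$), then having every coordinate at least $2$ from index $1$ onwards, increasing exactly one coordinate by $1$ at each step, and terminating at $(k_1,\ldots,k_q)$. A threshold set is a family $\{t^{(c)}_{\mathbf{i}}\}$ indexed by an interior point $\mathbf{i}$ and a colour $c\in\{1,\ldots,q\}$ with $\sum_{c=1}^{q} t^{(c)}_{\mathbf{i}} = 1$ and the natural colour-coordinate symmetry $t^{(\sigma(c))}_{\sigma\cdot\mathbf{i}} = t^{(c)}_{\mathbf{i}}$ for every $\sigma\in S_q$. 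For a path $P$ whose $i$th step increments coordinate $c_i$, set $s_i = t^{(c_i)}_{\mathbf{a}_i}$ and define
\[
S^{(c)}_{k_1,\ldots,k_q} \;=\; \min_{P\in\Pi_{k_1,\ldots,k_q}} \prod_{i=1}^{m} s_i^{\mu_i}, \qquad \mu_i = \max\bigl(a_i^{(1)},\ldots,a_i^{(q)}\bigr).
\]

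Next I would prove the direct multicolour analogue of Theorem 3 by induction on $k_1+\cdots+k_q$: in any $q$-colouring of $E(K_n)$ there exists a colour $c^{*}$ for which the number of monochromatic $K_{k_{c^*}}$ in colour $c^{*}$ is at least $S^{(c^*)}_{k_1,\ldots,k_q}\binom{n}{k_{c^*}} - O_{\mathbf{k}}(n^{k_{c^*}-1})$. The base cases (some $k_c = 1$) are trivial since vertices count as $K_1$s in every colour. For the inductive step at $\mathbf{k}=(k_1,\ldots,k_q)$, since $\sum_c t^{(c)}_{\mathbf{k}}=1$ every vertex $v$ has colour-$c$ degree at least $t^{(c)}_{\mathbf{k}}(n-1)$ for at least one $c$, and by counting over $v$ some colour $c^{*}$ is chosen by at least $t^{(c^*)}_{\mathbf{k}} n$ vertices. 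Applying induction to each such colour-$c^{*}$ neighbourhood at the reduced tuple $\mathbf{k}':=(k_1,\ldots,k_{c^*}-1,\ldots,k_q)$ yields, for some colour depending on $v$, either enough monochromatic $K_{k_c}$ in some colour $c\neq c^{*}$ (giving the bound directly) or enough colour-$c^{*}$ copies of $K_{k_{c^*}-1}$ which attach to $v$ through $c^{*}$-edges to produce $K_{k_{c^*}}$s; dividing by $k_{c^*}$ for overcounting then yields the required bound via the key inequality $\bigl(t^{(c^*)}_{\mathbf{k}}\bigr)^{\mu}\,S^{(c)}_{\mathbf{k}'} \geq S^{(c)}_{\mathbf{k}}$ supplied by the definition of $S^{(c)}_{\mathbf{k}}$.

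To deduce Theorem 5 I would then optimise the thresholds, assuming inductively that all $S^{(c)}_{\mathbf{k}'}$ with smaller coordinate sum have been made equal to the common symmetric value $M_{\mathbf{k}'}$. At $\mathbf{k}$ the $q$ parameters $t^{(c)}_{\mathbf{k}}$, subject to $\sum_c t^{(c)}_{\mathbf{k}}=1$, should be chosen so as to maximise $\min_{c}\bigl(t^{(c)}_{\mathbf{k}}\bigr)^{\mu} M_{k_1,\ldots,k_c-1,\ldots,k_q}$ with $\mu=\max_c k_c$. Setting all $q$ quantities equal and solving under the constraint gives
\[
t^{(c)}_{\mathbf{k}} \;=\; \frac{M_{k_1,\ldots,k_c-1,\ldots,k_q}^{-1/\mu}}{\sum_{c'=1}^{q} M_{k_1,\ldots,k_{c'}-1,\ldots,k_q}^{-1/\mu}}, \qquad M_{\mathbf{k}} \;=\; \Bigl(\sum_{c=1}^{q} M_{k_1,\ldots,k_c-1,\ldots,k_q}^{-1/\mu}\Bigr)^{-\mu},
\]
which is exactly the claimed recursion. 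Specialising to $\mathbf{k}=(t,\ldots,t)$ then gives the stated bound.

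The main obstacle is the two-fold pigeonhole in the inductive step: first, picking for each vertex a colour whose threshold is actually exceeded (which is where $\sum_c t^{(c)}_{\mathbf{k}}=1$ is used), and second, aggregating the vertex labels to find one colour $c^{*}$ selected by a fraction at least $t^{(c^*)}_{\mathbf{k}}$. Both steps generalise cleanly from the planar argument, provided one verifies that $\bigl(t^{(c^*)}_{\mathbf{k}}\bigr)^{\mu} S^{(c)}_{\mathbf{k}'} \geq S^{(c)}_{\mathbf{k}}$ holds for every colour $c$, not just the active one; this uses the $\max$-exponent convention critically, since $\mu=\max_c k_c$ does not decrease when a single coordinate is reduced by $1$. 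With this in place, the rest is a mechanical transcription of the planar proof.
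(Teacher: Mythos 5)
Your proposal is correct and does exactly what the paper intends: the paper states Theorem 5 without proof, remarking only that "the methods of this paper generalise easily to cover the multicolour case," and you have carried out precisely that generalisation of Theorems 3 and 4. The two pigeonhole steps, the key inequality $\bigl(t^{(c^*)}_{\mathbf{k}}\bigr)^{\mu} S_{\mathbf{k}'} \geq S_{\mathbf{k}}$ (using that $\mu = \max_c k_c$ does not decrease when a single coordinate drops by one), and the equalisation giving the stated recurrence are all sound.
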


We have not worked out the multicolour analogue of Theorem 1 but
doubtless it can also be generalised. It is unclear as to how
enlightening this would be though, as, like Theorem 5 above, it
would probably still require a computer test to estimate the value
of the constant for any particular $q$. It would certainly be
interesting, however, if one could accurately determine the way in
which the constants $M_{t, t, \cdots, t}$ grow with $q$. Is there,
for example, a constant $\alpha < 1$ such that
\[M_{t,t, \cdots, t} \geq q^{-\alpha q t^2},\]
for all $q$?

\end{document}